%% file: root.tex
\documentclass[letterpaper, 10pt, conference,dvipsname]{ieeeconf}
\IEEEoverridecommandlockouts                              

\usepackage{cite}
\let\labelindent\relax
\usepackage{enumitem}

\usepackage{times}
\usepackage{soul}
\usepackage{url}
\usepackage[hidelinks]{hyperref}
\usepackage[utf8]{inputenc}
\usepackage[small]{caption}
\usepackage{graphicx}
\usepackage{booktabs}
\usepackage{algorithm, algpseudocode}
\usepackage[switch]{lineno}
\usepackage{amsmath,bbold,amsthm}
\usepackage{xcolor}
\input{defs}

\usepackage{amssymb}
\usepackage{diagbox}
\usepackage{tikz}
\usetikzlibrary{arrows.meta,calc}

\usepackage{subcaption}
\usepackage{times,bm}
\usepackage{booktabs}
\usepackage[hidelinks]{hyperref}
\allowdisplaybreaks[2]
\begin{document}
\title{Linear Occupancy Homomorphisms for
Discounted Markov Decision Processes}
\author{Yihan Liu, Sarah H.Q. Li\thanks{Y. Liu and S.H.Q. Li are with the Daniel Guggenheim School of Aerospace Engineering, Georgia Institute of Technology, Atlanta, USA, 30332 (email: \{yliu3470,sarahli\}@gatech.edu).}
} %

\markboth{Journal of \LaTeX\ Class Files,~Vol.~14, No.~8, August~2015}%
{Shell \MakeLowercase{\textit{et al.}}: Bare Demo of IEEEtran.cls for IEEE Journals}

\maketitle
\thispagestyle{empty}
\pagestyle{empty}

\begin{abstract}
We introduce \emph{linear occupancy homomorphisms (LOHs)}, a class of Markov decision process (MDP) transformations  that defines the MDP homomorphism over the feasible occupancy measure space of MDPs. In contrast to classic MDP homomorphism frameworks, which are defined through mappings over the state and state-action spaces, LOHs map between occupancy measure spaces and are representable as linear transformation matrices that expose important structural properties of the homomorphism itself. For this class of MDP transformations, we derive finite dimensional linear inequality constraints that certify when a candidate transformation matrix is a valid LOH and derive the corresponding transformed MDP. 
We further establish a connection between LOHs, soft MDP homomorphisms, and factored matrix robust MDPs. When the linear transformation matrix is invertible, we prove that the original MDP is equivalent to a constrained MDP after the homomorphic transformation, and derive necessary and sufficient conditions under which the LOH preserves the optimal value. We illustrate the benefits of LOHs in two settings: improving robust MDP solutions by transforming nonrectangular parameter uncertainty sets into homomorphic rectangular MDPs, and decomposing a large MDP into smaller independent MDPs.
\end{abstract}
\input{1_intro}

\input{2_related_research}

\input{3_prelim}

\input{4_theory_results}
\input{5_simulation.tex}

\input{6_future_work}

\bibliographystyle{IEEEtran}
\bibliography{reference}
\end{document}

%% file: defs.tex
\newcommand{\ones}{\mathbb 1}
\newcommand{\reals}{{\mathbb{R}}}

\newcommand{\argmax}{\mathop{\rm argmax}}

\newcommand{\mnorm}[1]{{\left\vert\kern-0.25ex\left\vert\kern-0.25ex\left\vert #1 
    \right\vert\kern-0.25ex\right\vert\kern-0.25ex\right\vert}}

\newcommand{\mc}{\mathcal}

\newtheorem{definition}{Definition} 
\newtheorem{theorem}{Theorem}

\newtheorem{corollary}{Corollary}
\newtheorem{remark}{Remark}
\newtheorem{proposition}{Proposition}

%% file: 1_intro.tex
\section{Introduction}\label{sec:intro}
Many decision-making problems formulated as MDPs are hard to solve because their structural features are poorly matched to available solution methods.  Examples of this include robust MDP with transition uncertainty correlated across states~\cite{goyal2023beyond,li2026policy,li2025set}, interconnected dynamical networks in which weak coupling prevents subsystems from being solved independently~\cite{bauerle2011mdp,meuleau1998weakly,dean1995decomposition}, or state-action abstractions that obscure exploitable symmetries and decompositions within the underlying dynamics~\cite{lauri2023pomdp,li2006unified,abel2020value,rezaei2022continuous}. Classical MDP homomorphisms~\cite{ravindran2004algebraic,GIVAN2003163,sorg2009transfer} address related challenges by mapping an MDP's state-action space to an alternative representation, typically through state-action pair mappings and aggregation, with the goal of reducing problem complexity or enabling transfer across similar tasks~\cite{sorg2009transfer,abel2020value,rezaei2022continuous}.

However, state aggregations and state-action maps are not the only useful transformations. For example, linear combinations of state-action pairs can define new state-action pairs without simply merging existing pairs. In this paper, we ask whether an MDP homomorphism can be directly defined over the feasible MDP solution space, its occupancy measures, and whether such transformations can expose underlying structure that is difficult to exploit in their original state-action representations. 

This leads us to introduce LOHs, a class of MDP homomorphisms defined as linear transformations of the MDP's occupancy measure spaces. By establishing occupancy measures (instead of state and action spaces) as primitive objects being transformed, LOHs allow the transformation to be designed around \emph{desirable} structure to expose in the transformed MDP. 
Our approach enables us to use MDP homomorphisms as a general tool for restructuring state-action representation while maintaining an explicit mapping between the feasible solutions of the original and transformed problems.

\textbf{Contributions}. We formally define LOH between MDPs with the same state-action cardinality and derive a finite-dimensional certificate that guarantees when a linear matrix is a valid LOH. These conditions lead to an intuitive interpretation of LOH as a simultaneously consistent state and state-action transformations of the occupancy measure spaces, and establish a connection between LOHs, soft MDP homomorphisms, and factored matrix representations of MDP parameter uncertainty. When a valid LOH is invertible, we show that solving the original MDP is equivalent to solving the transformed MDP with an additional cone constraint, and derive necessary and sufficient conditions under which a LOH preserves the optimal value between its original and transformed MDPs. Finally, we use two illustrative numerical examples in robust MDPs and MDP state decompositions to demonstrate that LOHs can be designed to expose computational structure rather than merely reduce model dimension.

%% file: 2_related_research.tex
\section{Related Research}\label{sec:related_research}
MDP homomorphisms transform MDPs while preserving relevant reward and transition structure~\cite{ravindran2004algebraic}. Classical approaches define state-action mappings and equivalence relations that enable model minimization, abstraction, and hierarchical reinforcement learning~\cite{GIVAN2003163,ravindran2004algebraic,sutton2018reinforcement,puterman1994mdp}. Extensions of this research consider approximate homomorphisms and bound the resulting performance loss~\cite{taylor2008bounding}, soft stochastic mappings for transfer between related tasks~\cite{sorg2009transfer,fujimoto2023sale}, and homomorphisms for continuous state-action spaces and policy optimization~\cite{rezaei2022continuous}. In contrast, we define transformations over feasible occupancy-measure spaces. The LOH framework has fundamental connections to robust MDPs and MDP state aggregation and decomposition. 
Standard tractable formulations of robust MDPs rely on rectangular uncertainty sets~\cite{iyengar2005robust,nilim2005robust,wiesemann2013robust}, motivating extensions to coupled and non-rectangular uncertainty~\cite{mannor2016krectangular,goyal2023beyond,li2025set,li2026policy}. Factor-matrix models represent such correlations through shared transition factors~\cite{goyal2023beyond}. LOHs connect naturally to this representation but use occupancy transformations more generally to restructure transition dynamics. MDP state-action abstraction and aggregation reduce decision complexity by exploiting sufficient information states or weak coupling within MDPs~\cite{li2006unified,abel2020value,subramanian2022approximate,dean1995decomposition}. LOH differs in that it exposes block decomposability rather than reducing decision complexity.

%% file: 3_prelim.tex
\section{Preliminaries}
\label{sec:prelim}

\textbf{Notation.} For $n\in\mathbb{N}$, $\reals^n$ is the real vector space of dimension $n$ and $\reals^n_+$ is its nonnegative orthant. Inequalities $(\geq, >)$ between vectors are component-wise. A vector of dimension $k$ where each element is $1$ is denoted by $\ones_k$; $e_j$ denotes a unit vector that is $0$ everywhere and $1$ at the $j^{th}$ element; the cardinality of a set $\mc{X}$ is $|\mc{X}|$; a simplex set over a finite element set $\mc{X}$ is $\Delta(\mc{X}):=\{p\in\reals^{|\mc{X}|}_+:\ones^\top p=1\}$; a linear transformation $T\in\reals^{n\times n}$ over a set  $\mc S\subseteq\reals^n$ is denoted as $T(\mc S):=\{Tx:x\in\mc S\}$; $\operatorname{cone}(T):=\{Tz:z\in\reals^n_+\}$ is the cone generated by the columns of $T$; an indicator function $\mathbf{1}$ takes in a boolean $b$ and outputs $1$ when $b=$ True, and $0$ when $b=$ False.


\subsection{Discounted Infinite Horizon MDP}\label{sec:mdp}
An infinite horizon, discounted MDP is given by $\mathcal{M}(\mathcal{S}, \mathcal{I}, P, \gamma, R, \alpha)$~\cite{puterman1994mdp}. The set $\mc{S}$ is the set of MDP states, and the set $\mathcal{I}$ is the set of all admissible state-action pairs $(s,a)$, with $s\in\mc{S}$. We denote the cardinality of  $\mc{S}$ by $m := |\mc{S}|$, and the cardinality of $\mc{I}$ by $n := |\mc{I}|$.

The MDP transition dynamics are given by matrix $P \in \reals^{m\times n}_+$,
such that each element $P_{s', (s,a)} =P(s'\mid s,a)$ is the conditional probability of transitioning to  $s' \in \mc{S}$ from state-action pair $(s,a)\in \mc{I}$.
A feasible MDP transition matrix $P$ satisfies $\ones^\top_m P = \ones_n^\top$ and is element-wise non-negative.
The MDP reward is given by $R\in\mathbb{R}^{n}$, where each element $R_{s,a}$ is the reward received in state $s$ when taking action $a$. At each timestep $t$, we discount the received state-action reward by a factor $\gamma^t$, where $\gamma \in (0,1)$ is a constant discount factor. We assume that the initial time state is a random variable with state probability distribution given by $\alpha\in\Delta(\mathcal{S})$. Finally, we define the state-action incidence matrix
$D\in\mathbb{R}^{m\times n}$ as
\begin{equation}
    D_{s',(s,a)}
    :=
    \begin{cases}
        1, & s'=s\\
        0, & s'\neq s
    \end{cases}, \forall\  (s,a) \in \mc{I}. 
    \label{eq:D_definition}
\end{equation}
We can represent the infinite-horizon discounted MDP problem via an LP~\cite[Chapter 6.9]{puterman1994mdp}
\begin{equation}
\begin{aligned}
 \max_{x\in\reals^n_+}    & \, R^\top x\\
 \text{s.t. } & (D-\gamma P)x = \alpha,
\end{aligned}\label{eqn:LP_mdp}
\end{equation}
where $x \in\reals^n_+$~\cite[Eq. 6.9.2]{puterman1994mdp} is the expected discounted infinite-horizon occupancy measure in the state-action space, defined as
\begin{equation}\label{eq:sa_op}
x(s,a) =\textstyle\mathbb{E}\!\Big[
\sum_{t=0}^{\infty}
\gamma^t
\mathbf{1}\{s_t=s,\;a_t=a\}
\Big],\, \forall (s,a) \in \mc{I}.
\end{equation}
Given MDP $\mc{M}$, we can characterize the set of all feasible state-action occupancy measures by 
\begin{equation}\label{eqn:feasible_set}
    \mc{F}(\mc{M}) := \bigl\{\, x\in\mathbb{R}^n_+ \;\big|\; (D-\gamma P)\,x=\alpha \,\bigr\}.
\end{equation}
The discounted occupancy measure captures the underlying MDP policy~\cite{puterman1994mdp}, which is a more standard solution for MDPs. We focus on the occupancy measure as the primary solution for an MDP in this paper. Every occupancy measure corresponds to a feasible MDP policy~\cite{puterman1994mdp}.
Additionally,  we  use $d \in \reals_+^m$ to represent the expected discounted infinite-horizon state occupancy measure, defined as
\begin{equation}\label{eq:s_op}
\textstyle d(s)
=
\mathbb{E}\Big[
\sum_{t=0}^{\infty}
\gamma^t
\mathbf{1}\{s_t=s\}
\Big], \quad \forall s \in \mc{S}.
\end{equation}
The occupancy measures $d$ and $x$ jointly  satisfy the discounted flow-conservation constraint~\cite[Chapter 6]{puterman1994mdp}, given by
\begin{equation}\label{eq:flow_constrain_element}
\textstyle d(s')
=
\alpha(s')
+
\gamma
\sum_{s, a}
P(s' \mid s,a)\,x(s,a), \quad \forall s' \in \mc{S},
\end{equation}
which is also the element-wise form of the feasibility constraint in~\eqref{eqn:feasible_set}.
We say that a pair of state and state-action occupancy measures is \textbf{valid} for the MDP $\mc{M}$ if~\eqref{eq:flow_constrain_element} holds.

\section{Linear Occupancy Homomorphisms}




In RL and stochastic optimal control, MDP homomorphisms are often used to transform an MDP into an equivalent MDP that may be easier to solve.  In this section, we introduce an MDP homomorphism using occupancy measure space transformations between MDPs with the same state-action and state cardinalities. 
\begin{definition}[Linear Occupancy Homomorphism (LOH)]\label{def:LOH}
We say that  $T \in \reals^{n\times n}$ is a linear occupancy homomorphism from  $\mathcal{M}(\mathcal{S}, \mathcal{I}, P, \gamma, R, \alpha)$ to $\widehat{\mathcal M}(\hat{\mathcal{S}}, \hat{\mathcal{I}}, \hat{P}, \gamma, \hat{R}, \hat{\alpha})$ if $T$ maps $\mc{M}$'s feasible occupancy measure set into $\widehat{\mc{M}}$'s feasible occupancy measure set, $ T(\mc{F}(\mc{M})) \subseteq \mc{F}(\widehat{\mc{M}})$, or equivalently,
\begin{equation}
   \{Tx | x\geq0, \alpha = Dx - \gamma Px\} \subseteq \{\hat{x} | \hat{x} \geq 0,  \hat{\alpha} = \hat{D} \hat{x} - \gamma \hat{P} \hat{x}\}. 
\end{equation}
If  $ T\big(\mc{F}(\mc{M})\big)= \mc{F}(\widehat{\mc{M}})$, we say that $T$ is a linear occupancy isomorphism between $\mc{M}$ and $\widehat{\mc{M}}$.
\end{definition}

%% file: 4_theory_results.tex
We will discuss in later sections how LOH relates to robust MDPs and soft MDP homomorphisms. For now, we note that based on Definition~\ref{def:LOH}, verifying whether a matrix $T\in\reals^{n\times n}$ is a valid LOH requires evaluating an infinite number of constraints: for all $x \in \reals^{n}_+$  in $\mc{F}(\mc{M})$, verify that $Tx \in \mc{F}(\widehat{\mc{M}})$. To reduce the computational burden of verifying a valid LOH, we show that there exists a set of finite constraints that are sufficient for certifying $T$ as a LOH.
\begin{theorem}[Finite Dimensional LOH Certificate]\label{thm:certificate}
$T$ is a LOH from $\mc{M}(\mathcal{S}, \mc{I}, P, \gamma, R, \alpha)$ to  $\widehat{\mathcal M}(\hat{\mathcal{S}}, \hat{\mc{I}}, \hat{P}, \gamma, \hat{R}, \hat{\alpha})$ if there exists $Y \in \reals^{n\times m}$ and $W\in \reals^{m\times m}$ such that 
\begin{enumerate}
    \item[(A1)] $T-Y(D-\gamma P) \geq 0$, $Y\alpha \geq 0$;
    \item[(A2)] $(\hat{D} - \gamma \hat{P})T = W(D - \gamma P)$, $W\alpha = \hat{\alpha}$.
\end{enumerate}   
\end{theorem}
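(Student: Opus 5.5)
The proof is a direct verification of Definition~\ref{def:LOH}. We fix an arbitrary $x\in\mc{F}(\mc{M})$, so that $x\ge 0$ and $(D-\gamma P)x=\alpha$. We then show that $\hat x := Tx$ satisfies the two defining conditions of $\mc{F}(\widehat{\mc{M}})$: nonnegativity, and the flow constraint $(\hat D-\gamma\hat P)\hat x=\hat\alpha$. Condition (A1) gives the first and (A2) gives the second. No earlier result beyond the characterization~\eqref{eqn:feasible_set} is needed.

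\textbf{Flow constraint.} By the first identity in (A2),
\begin{equation*}
(\hat D-\gamma\hat P)Tx = W(D-\gamma P)x = W\alpha = \hat\alpha ,
\end{equation*}
where the second equality uses the feasibility of $x$ and the third uses $W\alpha=\hat\alpha$ from (A2). This is a purely algebraic substitution.

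\textbf{Nonnegativity.} We decompose $Tx$ with an S-lemma/Farkas-type splitting:
\begin{equation*}
Tx = \bigl(T-Y(D-\gamma P)\bigr)x + Y(D-\gamma P)x = \bigl(T-Y(D-\gamma P)\bigr)x + Y\alpha .
\end{equation*}
The first term is the product of an entrywise nonnegative matrix (by (A1)) and the nonnegative vector $x$, so it is nonnegative. The second term is nonnegative by $Y\alpha\ge 0$ in (A1). Hence $Tx\ge 0$, so $Tx\in\mc{F}(\widehat{\mc{M}})$. Since $x$ was arbitrary, $T(\mc{F}(\mc{M}))\subseteq\mc{F}(\widehat{\mc{M}})$, which is exactly the claim.

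\textbf{Main obstacle.} The only non-mechanical step is recognizing that the multiplier $Y$ certifies nonnegativity through this add-and-subtract decomposition, in the spirit of LP duality: each row of $Y$ is a dual certificate that the corresponding linear functional of $x$ is nonnegative on the polyhedron. Once that decomposition is written down, everything else is routine. We note that the theorem only asserts sufficiency, so we do not need the converse, which would be the harder direction and would require Farkas' lemma together with nonemptiness of $\mc{F}(\mc{M})$.
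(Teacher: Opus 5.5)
Your proof is correct and follows the paper's argument. For each $x\in\mc{F}(\mc{M})$, (A1) gives $Tx \ge Y(D-\gamma P)x = Y\alpha \ge 0$, which your add-and-subtract splitting simply writes out, and (A2) gives $(\hat D-\gamma\hat P)Tx = W(D-\gamma P)x = W\alpha = \hat\alpha$.
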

\begin{proof}
We show that if $Y$ and $W$ exist and satisfy A1 and A2, then $T\mc{F}(\mc{M}) \subseteq \mc{F}(\widehat{\mc{M}})$. For each $x \in \mc{F}(\mc{M})$, $x$ satisfies $x\geq0$ and $(D-\gamma P)x = \alpha$. 
From the first inequality in A1, $Tx \geq Y(D-\gamma P)x = Y\alpha$ whenever $x \geq 0$. From the second inequality in A1, we conclude that $Tx \geq Y\alpha \geq 0$. From the first equality in A2  and $(D-\gamma P)x = \alpha$, we have that $(\hat{D} - \gamma \hat{P})Tx = W(D - \gamma P)x = W\alpha$. From the second equality in A2, we have that $(\hat{D} - \gamma \hat{P})Tx = \hat{\alpha}$. We conclude that $Tx \in \mc{F}(\widehat{\mc{M}})$. 
\end{proof}

Theorem~\ref{thm:certificate} provides a checkable algebraic expression to determine when $T$ forms a LOH. Next, we show that Theorem~\ref{thm:certificate} also provides a method for obtaining the transformed MDP  $\widehat{\mc{M}}$ under LOH $T$. 
\begin{corollary}\label{cor:derive_transformed_mdp}
For an MDP $\mc{M}(\mathcal{S}, \mc{I}, P, \gamma, R, \alpha)$ and a transformation $T$, if there exists $\hat{P} \in \reals_+^{m\times n}$, $W \in \reals_+^{m\times m}$, and $Y \in \reals^{n\times m}$ such that
\[
\hat{P}T = WP, \quad W D = \hat{D}T, \quad 
\ones_m^\top W = \ones_m^\top, \quad \ones_m^\top \hat{P} = \ones_n^\top .\]
and $Y$ satisfies Theorem~\ref{thm:certificate}'s A1, then $T$ is a LOH from $\mc{M}$ to $\widehat{\mathcal M}(\hat{\mathcal{S}}, \hat{\mc{I}}, \hat{P}, \gamma, \hat{R}, W\alpha)$. 
\end{corollary}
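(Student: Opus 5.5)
The plan is to reduce Corollary~\ref{cor:derive_transformed_mdp} directly to Theorem~\ref{thm:certificate}. We take the same $Y$ and $W$ and set $\hat{\alpha} := W\alpha$. Since $Y$ is assumed to satisfy A1, it suffices to verify A2 for the pair $(\widehat{\mc{M}}, W)$.

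\textbf{Checking A2.} Subtracting $\gamma$ times the first hypothesis from the second gives
\[
(\hat{D}-\gamma\hat{P})T \;=\; \hat{D}T - \gamma \hat{P}T \;=\; WD - \gamma WP \;=\; W(D-\gamma P),
\]
which is the first equality in A2. The second equality, $W\alpha = \hat{\alpha}$, holds by the choice $\hat{\alpha} := W\alpha$. Theorem~\ref{thm:certificate} then yields $T(\mc{F}(\mc{M})) \subseteq \mc{F}(\widehat{\mc{M}})$, so $T$ is a LOH.

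\textbf{Checking that $\widehat{\mc{M}}$ is a well-defined MDP.} This is the only point needing care.
\begin{itemize}
\item $\hat{P}\geq 0$ and $\ones_m^\top \hat{P} = \ones_n^\top$ by hypothesis, so $\hat{P}$ is a feasible transition matrix.
\item $W\geq 0$ and $\alpha\geq 0$ give $W\alpha\geq 0$. Also $\ones_m^\top W\alpha = \ones_m^\top\alpha = 1$. Hence $\hat{\alpha}\in\Delta(\hat{\mc{S}})$.
\item $\hat{D}$ is understood as the incidence matrix~\eqref{eq:D_definition} of the state-action structure $(\hat{\mc{S}},\hat{\mc{I}})$. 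The identity $WD=\hat{D}T$ is simply a hypothesis about that fixed matrix.
\item $\hat{R}$ and $\gamma$ play no role in feasibility, so any reward may be attached.
\end{itemize}

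I expect no real obstacle. The main thing to be careful about is that the normalization conditions on $W$ and $\hat{P}$ are used only for well-posedness of $\widehat{\mc{M}}$, not for the LOH inclusion itself.
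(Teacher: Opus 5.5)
Your proposal is correct and follows the paper's proof essentially verbatim. You obtain the first equality of (A2) by combining $\hat{D}T = WD$ with $\hat{P}T = WP$, and you take $\hat{\alpha} := W\alpha$ for the second equality. You use the nonnegativity and column-stochasticity of $\hat{P}$ and $W$ only to make $\widehat{\mc{M}}$ a well-defined MDP with $W\alpha \in \Delta(\hat{\mc{S}})$. Your remark that these normalization conditions play no role in the inclusion $T(\mc{F}(\mc{M}))\subseteq\mc{F}(\widehat{\mc{M}})$ itself is accurate and makes the paper's argument slightly clearer.
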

\begin{proof}
Since $Y$ exists and satisfies A1 from Theorem~\ref{thm:certificate}, we will show that A2 is also true. We observe that if $WD = \hat{D}T$, and $\hat{P}T = WP$, then the first equality of A2 holds. Additionally, $\hat{P}$ is element-wise non-negative and column stochastic by assumption, so that it is a valid transition matrix. If W is column stochastic and element-wise non-negative, then $\ones^\top W\alpha = \ones^\top\alpha = 1$ and $W\alpha\geq0$, such that $\hat{\alpha} = W\alpha$ is a valid initial state probability distribution. 
\end{proof}
\noindent\textbf{Consistent State and State-action Transformation}.
The construction in Theorem~\ref{thm:certificate} has a natural interpretation of $W$ and $T$ being consistent transformations over the state occupancy and state-action occupancy measures, respectively. Under A2 from Theorem~\ref{thm:certificate}, the flow conservation equation in the transformed MDP $\widehat{\mc{M}}$ becomes
\begin{equation}
    W\alpha = (\hat{D} - \gamma \hat{P})Tx.
\end{equation}
The matrix $T \in \reals^{n\times n}$ transforms the state-action occupancy measure $x$, while the matrix  $W \in \reals^{m\times m}$ transforms the state occupancy measure $d$, producing the occupancy measure space mappings given by
\begin{equation}
\hat{x} = Tx, \quad \hat{d} = Wd, \quad \forall (x,d) \in \mc{F}(\mc{M}).
\end{equation}
The compatibility condition $\hat{D}T = WD$ from Corollary~\ref{cor:derive_transformed_mdp} enforces consistency between the two transformations: aggregating through the transformed state-action occupancy via $\hat{D}$ is identical to aggregating through the original state-occupancy measure via $D$ and then applying the state occupancy transformation $W$. Similarly, the constraint on transition matrices, $\hat{P}T = WP$ ensures that the next state flow induced by the transformed transition matrix $\hat{P}$ is identical to the next state flow induced by the original transition matrix $P$ and then transformed via $W$. Thus, a LOH can be viewed as a pair of
coordinated linear transformations acting on the state-action and state occupancy
spaces that preserve the Markov transition structure of the MDP.

\textbf{Connections to Factor-matrix Parameter Uncertainty}. If $T$ is non-negative and column stochastic, the transition matrix constraint $\hat{P}T = WP$ in Corollary~\ref{cor:derive_transformed_mdp} can be interpreted as a singleton $r$-rectangular uncertainty model from~\cite{goyal2023beyond}, which is used to rectify non-rectangular transition uncertainty in robust MDPs. Specifically, each transformed transition probability vector $WP(\cdot| s,a) = \hat{P}T\big(\cdot, (s,a)\big)$ is equivalent to taking convex combinations of the columns of $\hat{P}$ with weights $T\big(\cdot, (s,a)\big)$. This coincides with $r$-rectangularity when the factor uncertainty set is a singleton. However, we note that LOH and $r$-rectangularity are neither inclusive nor subsets of each other. Corollary~\ref{cor:derive_transformed_mdp} provides only sufficient conditions for LOH feasibility, and it requires an additional requirement $WD = \hat{D}T$, which is not present in $r$-rectangularity.

When the LOH $T$ is invertible and column stochastic, we can directly derive the transformed MDP $\widehat{\mc{M}}$ based on the transformation matrices $(W,T)$. 
\begin{corollary}
\label{cor:constructive_homomorphism}
For an MDP $\mc{M}(\mathcal{S}, \mathcal{I}, P, \gamma, R, \alpha)$, if there exists an invertible and column stochastic matrix ${T}\in\reals_+^{n\times n}$, and a column stochastic matrix  $W\in\reals_+^{m\times m}$, such that 
\begin{enumerate}
    \item $WDT^{-1} \in \{0,1\}^{m\times n}$ is a matrix with $\{0,1\}$ elements;
    \item $WPT^{-1} \geq 0$;
\end{enumerate}
then $T$ is a LOH from $\mc{M}$ to $\widehat{\mathcal M}(\hat{\mathcal{S}}, \hat{\mathcal{I}}, \hat{P}, \gamma, \hat{R}, \hat{\alpha})$ where
\[\hat{P} = WPT^{-1}, \quad \hat{\alpha} = W\alpha, \quad \hat{D} = WDT^{-1}. \]
\end{corollary}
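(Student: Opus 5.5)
The plan is to reduce the statement to Corollary~\ref{cor:derive_transformed_mdp}. That corollary needs four things: a nonnegative column-stochastic $\hat{P}$, a nonnegative column-stochastic $W$, the identities $\hat{P}T = WP$ and $WD=\hat{D}T$, and some $Y$ satisfying A1 of Theorem~\ref{thm:certificate}. With $\hat{P}$ and $\hat{D}$ defined as in the statement, I will check each requirement in turn. I also need to check that $\hat{D}$ is a genuine state-action incidence matrix, so that $\widehat{\mc{M}}$ is a bona fide MDP.

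\textbf{Step 1 (the certificate $Y$).} I would take $Y=0$. Condition A1 then reads $T\geq 0$ and $0\geq 0$. The first holds because $T$ is assumed element-wise nonnegative (it lies in $\reals_+^{n\times n}$). So A1 holds trivially.

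\textbf{Step 2 (the algebraic identities).} Because $T$ is invertible, the definitions give $\hat{P}T = WPT^{-1}T = WP$ and $\hat{D}T = WDT^{-1}T = WD$ directly. Nonnegativity of $\hat{P}$ is exactly condition 2, and $W$ is column stochastic and nonnegative by hypothesis. For column stochasticity of $\hat{P}$, I will use that $T$ is column stochastic and invertible. From $\ones_n^\top T = \ones_n^\top$, right-multiplying by $T^{-1}$ gives $\ones_n^\top T^{-1} = \ones_n^\top$. Then
\[
\ones_m^\top \hat{P} = \ones_m^\top W P T^{-1} = \ones_m^\top P T^{-1} = \ones_n^\top T^{-1} = \ones_n^\top,
\]
using in turn that $W$ and $P$ are column stochastic.

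\textbf{Step 3 (validity of $\hat{D}$), the main obstacle.} The corollary takes $\hat{D}$ as given, so the delicate point is that $\hat{D}=WDT^{-1}$ must actually be an incidence matrix of the form \eqref{eq:D_definition}. That is, each transformed state-action pair must belong to exactly one transformed state. The same computation as in Step 2 gives $\ones_m^\top \hat{D} = \ones_m^\top D T^{-1} = \ones_n^\top T^{-1} = \ones_n^\top$, since $D$ is also column stochastic (each column has a single $1$). Combined with condition 1, that $\hat{D}$ has $\{0,1\}$ entries, every column of $\hat{D}$ contains exactly one $1$. I will therefore define $\hat{\mc{I}}$ by assigning pair $j$ to the state $s$ with $\hat{D}_{s,j}=1$; this makes $\hat{D}$ the incidence matrix of $\widehat{\mc{M}}$.

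With all hypotheses of Corollary~\ref{cor:derive_transformed_mdp} verified, that corollary shows $T$ is a LOH from $\mc{M}$ to $\widehat{\mc{M}}$ with $\hat{\alpha}=W\alpha$, which completes the argument.
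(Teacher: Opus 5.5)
Your proposal is correct and follows the paper's route. Like the paper, you reduce to Corollary~\ref{cor:derive_transformed_mdp}, obtain $\hat{P}T=WP$ and $WD=\hat{D}T$ from invertibility of $T$, and get column stochasticity of $\hat{P}$ from $\ones_m^\top WPT^{-1}=\ones_m^\top PT^{-1}=\ones_n^\top T^{-1}=\ones_n^\top$. Your argument is also more complete than the paper's, which never exhibits the certificate $Y$ (your $Y=0$ works because $T\geq 0$) and never uses condition~1 to check that $\hat{D}$ is a genuine incidence matrix, as you do via $\ones_m^\top\hat{D}=\ones_n^\top$.
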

\begin{proof}
From the condition 1),  Corollary~\ref{cor:derive_transformed_mdp}'s assumption $\hat{P}T = WP$ is satisfied, $WD=\hat{D}T$ follows from the condition 2), and $\ones_m^\top WPT^{-1} = \ones_m^\top P T^{-1} = \ones_n^\top T^{-1} = \ones_n^\top$ implies that $\hat{P}$ is column stochastic.  Therefore Corollary~\ref{cor:derive_transformed_mdp} applies.
\end{proof}
\textbf{LOH Synthesis}. Our results primarily characterize and certify candidate LOHs, while a complementary problem is to synthesize a LOH that induces desired structure in the transformed MDP. Such a problem can be posed by searching over $(T,W,Y,\widehat P)$ subject to Theorem~1 and additional structural constraints on $\widehat P$, e.g., rectangular uncertainty or block-diagonal dynamics. We leave this problem for future work and focus on LOH's structural verification in this manuscript.

\noindent\textbf{Connection to Soft MDP Homomorphisms}.
We define a LOH directly as a transformation over the occupancy measure space. This is in contrast to prior MDP homomorphisms~\cite{sorg2009transfer,GIVAN2003163}, which usually define a transformation over the states and state-action spaces directly. For example, a soft MDP homomorphism~\cite{sorg2009transfer} can be extended to a stochastic mapping over state-action pairs. 
\begin{definition}[Soft Homomorphism~\cite{sorg2009transfer}]\label{def:orig_homomorphism_definition}
The functions $\mc{T}: \mc{I} \mapsto \Delta(\hat{\mc{I}})$ and $\mc{W}: \mc{S} \mapsto \Delta(\hat{\mc{S}})$ form a  \emph{soft MDP homomorphism} from $\mathcal{M}:=(\mathcal{S}, \mathcal{I}, P, \gamma, R, \alpha)$ to $\widehat{\mathcal M} :=(\hat{\mathcal{S}}, \hat{\mathcal{I}}, \hat{P}, \gamma, \hat{R}, \hat{\alpha})$ if for all $(s,a) \in \mc{I}$ and $s' \in \mc{S}$, 
  \begin{equation}
     \label{eqn:mdp_homomorphism}
     \textstyle{R}(s,a) =\mc{T}(s,a)^\top \hat{R}, \, \sum_{s' \in \mc{S}}P(s'| s,a)\mc{W}(s') = \hat{P} \mc{T}(s,a).
 \end{equation}
\end{definition}
We can show that under the restricted conditions in Corollary~\ref{cor:constructive_homomorphism}, a LOH induces a linear soft MDP homomorphism.
\begin{proposition}[Induced Soft MDP Homomorphism]
    If $(W, T)$ satisfy the conditions of Corollary~\ref{cor:constructive_homomorphism}, then the functions $\mc{T}: \mc{I} \mapsto \Delta(\hat{\mc{I}})$ and $\mc{W}: \mc{S} \mapsto \Delta(\hat{\mc{S}})$  defined as
    \begin{equation}\label{eqn:homomorphic_def}
        \mc{T}(s,a) := Te_{s,a}, \mc{W}(s) := We_s, \ \forall (s, a) \in \mc{I},
    \end{equation}
form a soft MDP homomorphism from $\mathcal{M}(\mathcal{S}, \mathcal{I}, P, \gamma, R, \alpha)$ to $\widehat{\mathcal M}(\hat{\mathcal{S}}, \hat{\mathcal{I}}, \hat{P}, \gamma, \hat{R}, \hat{\alpha})$ where ${R} = T^{\top}\hat{R}$,  $\hat{P} = WPT^{-1}$, and $\hat{\alpha} = W\alpha$.
\end{proposition}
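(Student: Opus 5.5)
The plan is to verify the three requirements of Definition~\ref{def:orig_homomorphism_definition} directly from the matrix identities available under Corollary~\ref{cor:constructive_homomorphism}: well-definedness of the maps as distributions, the reward identity, and the transition identity.

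\emph{Well-definedness.} Since $T\in\reals_+^{n\times n}$ is column stochastic, each column $Te_{s,a}$ is nonnegative with $\ones_n^\top Te_{s,a}=1$. Hence $\mc{T}(s,a)\in\Delta(\hat{\mc{I}})$, where we identify $\hat{\mc{I}}$ with an index set of size $n$ (the LOH preserves cardinalities). The same argument with $W\in\reals_+^{m\times m}$ column stochastic gives $\mc{W}(s)=We_s\in\Delta(\hat{\mc{S}})$.

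\emph{Reward condition.} The statement takes $R=T^\top\hat{R}$, i.e.\ $\hat{R}$ is chosen so that this holds; since $T$ is invertible, this means $\hat{R}=T^{-\top}R$. Then $\mc{T}(s,a)^\top\hat{R}=e_{s,a}^\top T^\top\hat{R}=e_{s,a}^\top R=R(s,a)$.

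\emph{Transition condition.} Write $\sum_{s'}P(s'\mid s,a)\mc{W}(s')=\sum_{s'}We_{s'}e_{s'}^\top Pe_{s,a}=WPe_{s,a}$, using $\sum_{s'}e_{s'}e_{s'}^\top=I_m$. On the other side, $\hat{P}\mc{T}(s,a)=WPT^{-1}Te_{s,a}=WPe_{s,a}$. The two sides agree for every $(s,a)$, which is just the column-wise reading of $\hat{P}T=WP$ from Corollary~\ref{cor:derive_transformed_mdp}.

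Finally, the target $\widehat{\mc{M}}$ is a genuine MDP. By Corollary~\ref{cor:constructive_homomorphism}, $\hat{P}=WPT^{-1}\ge0$ is column stochastic, and $\hat{\alpha}=W\alpha\in\Delta(\hat{\mc{S}})$ because $W$ is nonnegative and column stochastic.

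The main obstacle is bookkeeping rather than mathematics. We must make precise the identification of $\Delta(\hat{\mc{I}})$ with the simplex in $\reals^n$, and note that $\hat{R}$ is determined by $R$ through the invertibility of $T$. No condition beyond those of Corollary~\ref{cor:constructive_homomorphism} is needed, since the $\{0,1\}$ condition on $\hat{D}$ plays no role in the soft homomorphism equations.
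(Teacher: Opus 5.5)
Your proposal is correct and follows essentially the same route as the paper. Column stochasticity of $T$ and $W$ makes $\mc{T}$ and $\mc{W}$ simplex-valued, $\hat R = T^{-\top}R$ gives the reward identity, the column-wise reading of $\hat P T = WP$ gives the transition identity, and Corollary~\ref{cor:constructive_homomorphism} ensures $\widehat{\mc M}$ is a feasible MDP; your explicit computation $\sum_{s'}P(s'\mid s,a)We_{s'} = WPe_{s,a} = \hat P T e_{s,a}$ just spells out the step the paper states in one line.
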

\begin{proof}
Under~\eqref{eqn:homomorphic_def}, each transformation function $\mc{T}$ and $\mc{W}$ picks out a column of the matrices $T$ and $W$ over $\mc{I}$ and $\mc{S}$. Since $T$ and $W$ are column stochastic, their column vectors are elements of $\Delta(\hat{\mc{I}})$ and $\Delta(\hat{\mc{S}})$, respectively. When $\hat{R} = T^{-\top} R$, all elements $\hat{R}(s,a)$ satisfy the first equality in~\eqref{eqn:mdp_homomorphism}.  From Corollary~\ref{cor:constructive_homomorphism}, the transformed $\widehat{\mc{M}}$ is a feasible MDP, and $\hat{P}T = WP$, which ensures that $P(s'| s,a)$ satisfies~\eqref{eqn:mdp_homomorphism} for all $(s', s,a) \in \mc{S}\times \mc{I}$.  
\end{proof}
\subsection{Geometry and Optimality Preservation}
\label{sec:geometry_optimality}
After forming a LOH, we quantify the optimality gap between the MDP $\mc{M}$ and its homomorphic transformation $\widehat{\mc{M}}$ by solving the MDP in its original occupancy measure space $\mc{F}(\mc{M})$ vs in the transformed occupancy measure space $\mc{F}(\widehat{\mc{M}})$. In this section, we first characterize the set inclusion relationship between the transformed feasible set $\mc{F}(\widehat{\mc{M}})$ and the original feasible set $\mathcal{F}(\mc{M})$ and use this relationship to determine whether an optimality gap may exist. 

Consider the conic hull of LOH $T$: the space spanned by non-negative combinations of its columns, given by
\begin{equation}
    \operatorname{cone}(T)
    :=
    T\mathbb{R}_{+}^{n}
    =
    \{Ty:y\in\mathbb{R}_{+}^{n}\}.
    \label{eq:cone_definition}
\end{equation}
In the LOH definition, we noted that $T\mathcal F(\mc{M})\subseteq\mc{F}(\hat{\mc{M}})$, next we show that $\operatorname{cone}(T)$ is exactly what constrains  $T\mc{F}(\mc{M})$.
\begin{proposition}[Image of the transformed feasible set]\label{prop:feasible_set_geometry}
For a LOH $T$ from $\mathcal{M}$ to $\widehat{\mathcal{M}}$ that satisfies the conditions of Corollary~\ref{cor:constructive_homomorphism},    $T\mathcal F(\mc{M})={\mathcal F}(\widehat{\mc{M}})\cap\operatorname{cone}(T)$.
\end{proposition}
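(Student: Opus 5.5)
We prove the two inclusions separately, relying mainly on the invertibility of $T$ and on the identities $\hat{D} = WDT^{-1}$, $\hat{P} = WPT^{-1}$, $\hat{\alpha}=W\alpha$ from Corollary~\ref{cor:constructive_homomorphism}.

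\emph{Forward inclusion.} Corollary~\ref{cor:constructive_homomorphism} already gives that $T$ is a LOH, so $T\mathcal F(\mc M)\subseteq \mathcal F(\widehat{\mc M})$. Every $x\in\mathcal F(\mc M)$ satisfies $x\ge 0$, hence $Tx\in T\reals^n_+=\operatorname{cone}(T)$. Therefore $T\mathcal F(\mc M)\subseteq \mathcal F(\widehat{\mc M})\cap\operatorname{cone}(T)$.

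\emph{Reverse inclusion.} Take $\hat x\in\mathcal F(\widehat{\mc M})\cap\operatorname{cone}(T)$ and write $\hat x=Ty$ with $y\ge 0$. Since $T$ is invertible, $y=T^{-1}\hat x$ is unique, so it suffices to show $y\in\mathcal F(\mc M)$. Nonnegativity $y\ge0$ holds by construction. Substituting the expressions for $\hat D$ and $\hat P$ into the feasibility condition gives
\[
\hat\alpha=(\hat D-\gamma\hat P)\hat x = W(D-\gamma P)T^{-1}Ty = W(D-\gamma P)y .
\]
Combining this with $\hat\alpha = W\alpha$ yields $W\big((D-\gamma P)y-\alpha\big)=0$.

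\emph{Main obstacle.} To conclude $(D-\gamma P)y=\alpha$ we must cancel $W$, and Corollary~\ref{cor:constructive_homomorphism} only states that $W$ is column stochastic, not that it is invertible. There are two ways to close this gap.

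\begin{enumerate}
\item \emph{Use the $\{0,1\}$ structure of $\hat D$.} First check whether the standing setting, with $|\hat{\mc S}|=m$ and $\hat D = WDT^{-1}\in\{0,1\}^{m\times n}$ required to be a genuine state-action incidence matrix with each column containing exactly one $1$, already forces $W$ to be invertible. The relation $WD=\hat DT$ with $T$ invertible gives $\operatorname{rank}(WD)=\operatorname{rank}(\hat D)=m$, because $\hat D$ is an incidence matrix in which every state has at least one action. Hence $\operatorname{rank}(W)\ge m$, so $W$ is invertible. With $W$ invertible, $(D-\gamma P)y=\alpha$ follows, and therefore $\hat x = Ty\in T\mathcal F(\mc M)$.
\item \emph{Assume invertibility directly.} If the incidence-matrix interpretation of $\hat D$ is not taken as given, the argument instead requires invertibility of $W$ as an implicit hypothesis, or it must be derived from the requirement that $\widehat{\mc M}$ be an MDP with $m$ states, each having at least one admissible action.
\end{enumerate}

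I expect this rank step on $W$ to be the only nontrivial part of the proof; everything else is direct substitution.
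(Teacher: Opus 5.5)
Your proposal is correct and follows the paper's proof essentially step for step. The forward inclusion uses $T\mathcal F(\mathcal M)\subseteq\mathcal F(\widehat{\mathcal M})$ together with nonnegativity. The reverse inclusion pulls back $\hat x=Ty$ and cancels $W$, which the paper justifies exactly as in your option~1 by asserting $\operatorname{rank}(\hat D)=m$ because $\hat D=WDT^{-1}$ is a state-action incidence matrix.

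Your caveat is well placed, and the paper makes the same implicit assumption that every transformed state has at least one action. The stated conditions only force each column of $\hat D$ to contain a single $1$ and do not exclude a zero row. For example, $T=I$ with $W=\begin{bmatrix}1&1\\0&0\end{bmatrix}$ on a two-state MDP satisfies them, and there the reverse inclusion fails.
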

\begin{proof}
First, we show $T\mathcal{F}(\mc{M})\subseteq {\mathcal{F}}(\widehat{\mc{M}})\cap\operatorname{cone}(T).$ By definition, $T\mathcal{F}(\mc{M})\subseteq {\mathcal{F}}(\widehat{\mc{M}})$. Since $\mathcal{F}(\mc{M})$ consists of strictly non-negative vectors, $T\mathcal{F}(\mc{M}) \subseteq \operatorname{cone}(T)$ as well. 

For the reverse inclusion, consider $\hat{x} \in \mc{F}(\widehat{\mc{M}}) \cap \operatorname{cone}(T)$. Since $\hat{x} \in \operatorname{cone}(T)$, there exists $x$ such that $\hat{x} = Tx$. Additionally, $\hat{x} \in \mc{F}(\widehat{\mc{M}})$ implies that $(\hat{D} - \gamma\hat{P})\hat{x} = \hat{\alpha}$. Under Corollary~\ref{cor:constructive_homomorphism}, $ (\hat{D} - \gamma\hat{P})=W ({D} - \gamma{P}) T^{-1}, \, \widehat\alpha=W\alpha.$ 
This implies that $W({D} - \gamma{P})T^{-1}Tx= W\alpha$. To conclude from here that $(D-\gamma P)x  = \alpha$, we require $W$ to be full rank. We note that $W$ satisfies $\hat{D} = WDT^{-1}$. Since $\hat{D}$ is a state-action incidence matrix for $\widehat{M}$, $\text{rank}(\hat{D}) = m$, therefore $W \in \reals^{m\times m}$ is full rank and invertible, such that $({D} - \gamma{P})x = \alpha$. Since $x \geq 0$, $x \in \mc{F}(\mc{M})$. 
\end{proof}


\begin{figure}[t]
\centering
\newcommand{\figfont}{\bfseries\boldmath\fontsize{12pt}{14pt}\selectfont}
\begin{tikzpicture}[
    every node/.style={font=\figfont},
    arr/.style={-{Stealth[length=6pt,width=5pt]}, line width=1.1pt},
    red/.style={draw=figred, line width=1.2pt}
]
\definecolor{figred}{RGB}{232,96,76}

\draw[line width=1.1pt] (0,0) rectangle (1.8,1.8);
\node at (0.9,2.2) {$\mathcal F(\mathcal M)$};
\draw[red, rotate around={35:(0.9,0.95)}] (0.9,0.95) ellipse (0.5 and 0.34);

\begin{scope}[shift={(4.0,0)}]
  \def\W{2.3}\def\H{1.7}
  \draw[line width=1.1pt] (0,0) rectangle (\W,\H);
  \node at (0.2,2.2) {$\mathcal F(\widehat{\mathcal M})$};

  \begin{scope}
    \clip (0,0) rectangle (\W,\H);
    \clip (1.25,2.75) -- (3.05,0.65) -- (0.4,-0.85) -- cycle;
    \foreach \k in {-3,-2.7,...,6} {\draw[line width=0.8pt] (\k,\H+0.5) -- ++(3.4,-3.4);}
  \end{scope}

  \draw[red] (1.25,2.75) -- (3.05,0.65) -- (0.4,-0.85) -- cycle;
  \draw[red, fill=white, rotate around={-70:(1.5,1.15)}] (1.5,1.15) ellipse (0.55 and 0.33);
  \node[fill=white, inner sep=1pt] at (1.5,1.15) {$\hat{\mathcal X}^\star$};
\end{scope}

\draw[arr] (2.1,1.4) -- (3.7,1.4) node[midway, above=3pt] {$T$};
\draw[arr] (3.7,0.6) -- (2.1,0.6) node[midway, below=3pt] {$T^{-1}$};

\node[align=center, anchor=north] at (0.9,-0.8)
  {$\mathcal X^\star=T^{-1}(\hat{\mathcal X}^\star$\\[1pt]$\cap\,\mathrm{cone}(T))$};
\draw[-{Stealth[length=5pt,width=4pt]}, line width=0.8pt] (0.9,-0.75) -- (0.9,0.65);       

\node[align=left, anchor=north west] at (3.9,-1.3)
  {$T\mathcal F(\mathcal M)=$\\[1pt]$\mathcal F(\widehat{\mathcal M})\cap\mathrm{cone}(T)$};
\draw[-{Stealth[length=5pt,width=4pt]}, line width=0.8pt] (4.95,-1.25) -- (5.25,0.3);     

\node[anchor=west] at (5.9,-0.5) {$\mathrm{cone}(T)$};
\end{tikzpicture}
\caption{The regions $\mathcal F(\mathcal M)$, $\mathcal F(\widehat{\mathcal M})$, $\operatorname{cone}(T)$ and $T\mathcal F(\mathcal M)=\mathcal F(\widehat{\mathcal M})\cap\mathcal{\operatorname{cone}}(T)$.}
\label{fig:geometry}
\vspace{-2em}
\end{figure}
\begin{remark} 
\label{rem:geometric_interpretation} Proposition~\ref{prop:feasible_set_geometry} shows that while the transformed MDP's feasible set contains all transformed occupancy measures from $\mc{F}(\mc{M})$, it will contain additional feasible occupancy measures that have no correspondence to the original MDP's occupancy measures. These additional occupancy measures are tangential to $\operatorname{cone}(T)$.
\end{remark}
A key implication of Proposition~\ref{prop:feasible_set_geometry} is that a LOH is \emph{not surjective} from the original occupancy measure space $\mc{F}(\mc{M})$ to the transformed MDP's occupancy measure space $\mc{F}(\widehat{\mc{M}})$. As a result, the transformed MDP has a larger feasible occupancy measure space in general and may achieve a higher objective than the original MDP.

\begin{theorem}[MDP Objective Gap Under LOH]
\label{thm:opt}
Consider a matrix pair $T$ and $W$ which satisfies the assumptions of Corollary~\ref{cor:constructive_homomorphism} and forms a LOH from $\mc{M}(\mathcal{S}, \mathcal{I}, P, \gamma, R, \alpha)$ to $\widehat{\mathcal M}(\hat{\mathcal{S}}, \hat{\mathcal{I}}, \hat{P}, \gamma, \hat{R}, \hat{\alpha})$, where $\hat{R} = T^{-\top} R$. Let $V^\star$ and $\hat{V}^\star$ denote the optimal value of each respective MDP, i.e.,
\begin{equation}
    V^\star
    =
    \max_{x\in\mathcal F(\mc{M})}
    R^\top x, \quad 
    \widehat{V}^\star  = \max_{\widehat x\in\mathcal{F}(\widehat{\mc{M}})}\hat{R}^\top \hat{x},
    \label{eq:restricted_optimization}
\end{equation}
then $V^\star \leq \hat{V}^\star$. 
\end{theorem}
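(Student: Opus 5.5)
The plan is a direct feasibility-transfer argument: take an optimal occupancy measure of $\mc{M}$, push it through $T$, and show that the image is feasible for $\widehat{\mc{M}}$ and attains the same objective value. The optimal value of $\widehat{\mc{M}}$ can then be no smaller.

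First, let $x^\star\in\mathcal F(\mc{M})$ attain $V^\star$. The maximum is attained because $\mathcal F(\mc{M})$ is a nonempty polytope. It is nonempty since every stationary policy induces a feasible occupancy measure. It is bounded since summing the rows of $(D-\gamma P)x=\alpha$ and using $\ones_m^\top D=\ones_n^\top$ and $\ones_m^\top P=\ones_n^\top$ gives $(1-\gamma)\ones_n^\top x=1$. Set $\hat{x}:=Tx^\star$.

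Second, $\hat x\in\mathcal F(\widehat{\mc{M}})$. Since $T$ and $W$ satisfy the assumptions of Corollary~\ref{cor:constructive_homomorphism}, $T$ is a LOH, so by Definition~\ref{def:LOH} we have $T\mathcal F(\mc{M})\subseteq\mathcal F(\widehat{\mc{M}})$. If one prefers to verify this directly, nonnegativity follows from $T\geq 0$ and $x^\star\geq 0$. For the equality constraint, $\hat{D}=WDT^{-1}$ and $\hat{P}=WPT^{-1}$ give
\begin{equation*}
(\hat D-\gamma\hat P)Tx^\star=W(D-\gamma P)x^\star=W\alpha=\hat\alpha .
\end{equation*}
Proposition~\ref{prop:feasible_set_geometry} gives the same inclusion, since $T\mathcal F(\mc{M})=\mathcal F(\widehat{\mc{M}})\cap\operatorname{cone}(T)$.

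Third, the objective values agree: $\hat R^\top\hat x=R^\top T^{-1}Tx^\star=R^\top x^\star=V^\star$, using $\hat R=T^{-\top}R$. Hence $\hat V^\star\geq \hat R^\top \hat x=V^\star$.

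The only point needing care is that $\hat V^\star$ is well defined, that is, the maximum over $\mathcal F(\widehat{\mc{M}})$ exists. Nonemptiness follows from the second step. Boundedness uses the same row-sum argument applied to $\widehat{\mc M}$, which requires $\ones_m^\top\hat D=\ones_n^\top$ and $\ones_m^\top\hat P=\ones_n^\top$. The second identity is shown in the proof of Corollary~\ref{cor:constructive_homomorphism}. For the first, $\ones_m^\top WDT^{-1}=\ones_n^\top T^{-1}=\ones_n^\top$, using column stochasticity of $W$, $D$ and $T$. Beyond this the argument is routine, and I expect no real obstacle.
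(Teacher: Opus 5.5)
Your proof is correct and uses the same two facts as the paper's: the objective invariance $\hat R^\top Tx=R^\top x$ under $\hat R=T^{-\top}R$, and $T\mc{F}(\mc{M})\subseteq\mc{F}(\widehat{\mc{M}})$. The difference lies in how the argument is packaged and what it depends on. The paper uses the full set equality of Proposition~\ref{prop:feasible_set_geometry} to rewrite $V^\star$ as $\max\{\hat R^\top\hat x:\hat x\in\mc{F}(\widehat{\mc{M}})\cap\operatorname{cone}(T)\}$, and then drops the cone constraint. You instead push a single optimal point forward. Your route needs only the forward inclusion, which is the LOH hypothesis itself, or equivalently your one-line check $(\hat D-\gamma\hat P)Tx^\star=W(D-\gamma P)x^\star=W\alpha$. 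It therefore does not depend on the reverse inclusion. In the paper, that reverse inclusion requires $W$ to be invertible, which is argued from $\operatorname{rank}(\hat D)=m$. That rank condition amounts to every transformed state having an admissible action, and it is not explicitly among the hypotheses of Corollary~\ref{cor:constructive_homomorphism}. In return, the paper's route gives the exact cone-constrained reformulation of $V^\star$, which Corollary~\ref{cor:exact_optimality} is built on; for the inequality alone it is more than is needed. Your check that both maxima are attained, via $(1-\gamma)\ones^\top x=1$ in each MDP, is a detail the paper leaves implicit.
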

\begin{proof}
The LP form of MDP $\mc{M}$ is $ \max_{x\in \mc{F}(\mc{M})}     R^\top x$. Let $\hat{x} = Tx$ where $T$ is invertible and satisfies the assumptions of Corollary~\ref{cor:constructive_homomorphism} with matrix $W$. It holds that $x \in\mc{F}(\mc{M})$ if and only if $Tx \in T\big(\mc{F}(\mc{M})\big)$, and the optimal value of $\mc{M}$ is equivalently given by 
 \begin{equation}\label{eqn:thm2_pf_1}
 \textstyle V^\star = \max_{x}    (T^{-\top} R)^\top Tx, \text{ s.t. }  Tx \in T\big(\mc{F}(\mc{M})\big). 
\end{equation}
From Proposition~\ref{prop:feasible_set_geometry}, $T\big(\mc{F}(\mc{M})\big) = \mc{F}(\widehat{\mc{M}}) \cap \operatorname{cone}(T)$, and $\hat{R} = T^{-\top} R$. Therefore,
\begin{equation}\label{eqn:thm2_pf_2}
    \textstyle V^\star =\max_{\hat{x}}     \hat{R}^\top\hat{x} \text{ s.t. } \hat{x} \in \mc{F}(\widehat{\mc{M}}) \cap \operatorname{cone}(T). 
\end{equation}
Since~\eqref{eqn:thm2_pf_2} is identical to the second maximization problem in~\eqref{eq:restricted_optimization} except with an additional constraint, we conclude $V^\star \leq \hat{V}^\star$.
\end{proof}
Theorem~\ref{thm:opt} shows that optimizing the transformed MDP with an additional constraint $\operatorname{cone}(T)$ is equivalent to optimizing the original MDP.
We leverage this fact in the following section to show that we can optimize non-rectangular robust MDPs better than rectangular approaches can~\cite{wiesemann2013robust,goyal2023beyond}. Finally, we provide the necessary and sufficient condition to achieve zero optimality gap. 
\begin{corollary}\label{cor:exact_optimality}
Let the set of optimal occupancy measures for $\widehat{\mc{M}}$ be denoted as $\hat{\mc{X}}^\star$, such that 
\[\textstyle \hat{\mc{X}}^\star = \argmax_{\widehat x\in\mathcal{F}(\widehat{\mc{M}})}\hat{R}^\top \hat{x},\]
then the following conditions are equivalent: 
    \begin{enumerate}
        \item $\widehat V^\star=V^\star$;
        \item $\widehat{\mathcal X}^\star\cap\operatorname{cone}(T)\neq\varnothing$;
        \item there exists
$\widehat x^{\star}\in\widehat{\mathcal{X}}^{\star}$ such that $T^{-1}\widehat x^{\star}\geq0,$
    \end{enumerate}
where $V^\star$ and $\widehat{V}^\star$ are the optimal value of the MDPs $\mc{M}$ and $\widehat{\mc{M}}$ under LOH defined in~\eqref{eq:restricted_optimization}. 
\end{corollary}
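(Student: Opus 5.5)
The plan is to prove the cycle (1)$\Rightarrow$(2)$\Rightarrow$(3)$\Rightarrow$(1), relying on the reformulation established in the proof of Theorem~\ref{thm:opt}:
\[
V^\star=\max_{\hat x}\{\hat R^\top \hat x : \hat x\in\mathcal F(\widehat{\mathcal M})\cap\operatorname{cone}(T)\}.
\]
This identity holds by Proposition~\ref{prop:feasible_set_geometry} together with $\hat R=T^{-\top}R$. I also use throughout that $T$ is invertible, so $\operatorname{cone}(T)=\{\hat x: T^{-1}\hat x\geq 0\}$: indeed $\hat x=Ty$ with $y\geq0$ forces $y=T^{-1}\hat x$.

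\textbf{(2)$\Leftrightarrow$(3).} By the invertibility characterization of $\operatorname{cone}(T)$, a point $\hat x^\star\in\widehat{\mathcal X}^\star$ lies in $\operatorname{cone}(T)$ exactly when $T^{-1}\hat x^\star\ge0$. So these two conditions are the same statement, and both directions follow at once.

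\textbf{(2)$\Rightarrow$(1).} Take $\hat x^\star\in\widehat{\mathcal X}^\star\cap\operatorname{cone}(T)$. It lies in $\mathcal F(\widehat{\mathcal M})\cap\operatorname{cone}(T)$, so by the reformulation $V^\star\ge \hat R^\top\hat x^\star=\widehat V^\star$. Theorem~\ref{thm:opt} gives $V^\star\le\widehat V^\star$, and hence equality. Concretely, $x=T^{-1}\hat x^\star\in\mathcal F(\mathcal M)$ attains $R^\top x=\hat R^\top T x=\widehat V^\star$.

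\textbf{(1)$\Rightarrow$(2).} This direction needs the maximum $V^\star$ to be attained. That holds because $\mathcal F(\mathcal M)$ is nonempty and compact: summing the flow constraint gives $\ones^\top x=1/(1-\gamma)$ with $x\ge0$, so the LP has an optimizer $x^\star$. Set $\hat x^\star=Tx^\star$. By Proposition~\ref{prop:feasible_set_geometry}, $\hat x^\star\in\mathcal F(\widehat{\mathcal M})\cap\operatorname{cone}(T)$. Its value is $\hat R^\top\hat x^\star=R^\top T^{-1}Tx^\star=V^\star=\widehat V^\star$, so $\hat x^\star\in\widehat{\mathcal X}^\star$, which is condition (2). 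The same compactness argument applied to $\mathcal F(\widehat{\mathcal M})$ guarantees that $\widehat{\mathcal X}^\star$ is nonempty and $\widehat V^\star$ is finite, so the argmax in the statement is well defined.

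\textbf{Main obstacle.} The real content is the attainment and well-definedness issue in (1)$\Rightarrow$(2), and it is handled by the boundedness of occupancy measures. Everything else is bookkeeping with the identity $\hat R^\top T x=R^\top x$ and Proposition~\ref{prop:feasible_set_geometry}.
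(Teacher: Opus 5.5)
Your proof is correct and follows the same route as the paper. Both arguments get (1)$\Leftrightarrow$(2) from the reformulation in Theorem~\ref{thm:opt}, which writes $V^\star$ as the optimum of $\widehat{\mathcal M}$ restricted to $\operatorname{cone}(T)$, and (2)$\Leftrightarrow$(3) from $\operatorname{cone}(T)=\{\hat x: T^{-1}\hat x\geq 0\}$ for invertible $T$; your explicit attainment argument via $\ones^\top x=1/(1-\gamma)$ makes rigorous a step that the paper's one-line ``if and only if'' leaves implicit.
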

\begin{proof}
By Theorem~\ref{thm:opt},
$V^\star$ is the optimal value of the transformed MDP $\widehat{\mc{M}}$ with an additional constraint $\operatorname{cone}(T)$. Therefore,
$\widehat V^\star=V^\star$ if and only if
$\mc{F}(\widehat{\mc{M}})  \cap\operatorname{cone}(T) \cap\widehat{\mathcal X}^\star \neq \emptyset$, which proves the equivalence between conditions 1 and 2.

Since $T$ is invertible and
$\operatorname{cone}(T)=\{Tx:x\geq0\}$, we have $\widehat x^\star\in\operatorname{cone}(T)$ if and only if $T^{-1}\widehat x^\star\geq0.$ Hence, conditions 2 and 3 are equivalent.
\end{proof}
In Figure~\ref{fig:geometry}, we visualize the relationship between the original and transformed feasible occupancy spaces in Proposition~\ref{prop:feasible_set_geometry}. Transformation $T$ sends $\mc{F}(\mc{M})$ to the hatched region $\mc F(\widehat{\mc M})\cap\operatorname{cone}(T)$. By Corollary~\ref{cor:exact_optimality}, $\hat V^\star=V^\star$ if and only if there exists optimal occupancy measure of $\widehat{\mc{M}}$ that lies in the hatched region.

%% file: 5_simulation.tex
\section{Numerical examples}
We develop two illustrative examples of LOH to robust MDP and MDP state-action decomposition to demonstrate how LOH improves the optimality and efficiency of existing solution approaches in each of these domains. 
\subsection{Non-rectangular Robust MDP}
A robust MDP~\cite{iyengar2005robust,wiesemann2013robust} seeks to find a policy that optimizes its worst-case discounted return induced by a set of transition matrices, $\mc{U} \subseteq \reals^{m\times n}_+$. For rectangular transition uncertainty sets, the worst-case optimal value formulation is equivalent to the standard robust MDP objective, which can be expressed as
\begin{equation}
    V^\star(\mathcal U)=\min_{P\in\mathcal U}\max_{x \in \mc{F}(P) }R^\top x.
    \label{eq:robust_mdp}
\end{equation}
The solvability of~\eqref{eq:robust_mdp} relies critically on the structure of the transition uncertainty set, $\mc{U}$. When the uncertainty in the MDP transition dynamics is independent across $(s,a)$ pairs ($\mc{U} = \prod_{(s,a) \in \mc{I}} \mc{U}_{s,a}$) or across states ($\mc{U} = \prod_{s \in \mc{S}} \mc{U}_{s}$), value function-based methods such as dynamic programming apply. Beyond these rectangular settings, guarantee of finding~\eqref{eq:robust_mdp}'s optimal solution is sparse (e.g. see $k$-rectangularity~\cite{mannor2016krectangular} and $r$-rectangularity~\cite{goyal2023beyond}). We demonstrate that LOH provides an intuitive and natural way to consider linear transformations over the transition uncertainty set $\mc{U}$ that may lead to more optimal solutions to~\eqref{eq:robust_mdp}.

We consider a robust MDP with  state set $\mathcal S=\{s_1,s_2,s_3\}$, state-action set $\mc{I} = \{(s_1,a_1),(s_1,a_2),(s_2,a_1),(s_3,a_1)\}$ and discount factor $\gamma=0.9$, such that $m = 3$ and $n =4$. In this MDP,  only state $s_1$ has two admissible actions while the remaining states only have one admissible action. The initial distribution is $\alpha=\begin{bmatrix}0.5&0.5&0\end{bmatrix}^\top$ and the reward vector is $R = \begin{bmatrix} 0&0.1&0&0.1 \end{bmatrix}^\top$. 
The transition matrix $P \in \reals^{3\times 4}_+$ is uncertain and parametrized by $q = [q_1,\,q_2,\,q_3]$, given by
\begin{equation}
    P(q)=
    \begin{bmatrix}
        \tfrac{1-q_1}{2} & \tfrac{1-q_2}{2} & \tfrac{1-q_3}{2} & 0\\[0.2em]
        \tfrac{1-q_1}{2} & \tfrac{1-q_2}{2} & \tfrac{1-q_3}{2} & 0\\[0.2em]
        q_1 & q_2 & q_3 & 1
    \end{bmatrix}. \label{eqn:uncertainty_set}
\end{equation}
\textbf{Correlated uncertainty set}. The transition uncertainty set is given by 
\begin{equation}
    \mc{U} = \left\{P(q)\mid q  \in \{\begin{bmatrix}0.9 \\ 0.45\\ 0.1\end{bmatrix}, \begin{bmatrix}0.55 \\ 0.1\\ 0.8\end{bmatrix}\}\right\}. 
\end{equation}
When there are only two possible $q$ values, $\mc{U}$ is a  non-$(s,a)$-rectangular and non-$s$-rectangular transition uncertainty set; knowing any one of $q_1,q_2,q_3$ determines the other two values. We directly solve~\eqref{eq:robust_mdp} under correlated uncertainty set $\mc{U}$ via the HiGHs solver in CVXPY~\cite{huangfu2018parallelizing}, and obtain that $V^\star(\mc{U})= 0.86$, and that the optimal occupancy measure is $x^\star=\begin{bmatrix}0 & 1.4 & 1.4 & 7.1\end{bmatrix}^\top .$

\textbf{Rectangular Over-approximation}. Non-rectangular uncertainty sets in robust MDPs are often over-approximated via rectangular uncertainty sets. We visualize the over-approximation procedure in Figure~\ref{fig:transition_sets}, where we plot the transition probability values $P(s_3| s_2, a_1)$ and $P(s_3| s_1, a_1)$ of each uncertainty set. We observe that the rectangular over-approximation of $\mc{U}$ introduces two pairs of transition probability values that are not part of $\mc{U}$. Numerically, $\mc{U}$'s  $(s,a)$-rectangular approximation set is given by 
\begin{equation}
\mc{U}_r = \left\{P\bigl(\begin{bmatrix}
         q_1\\q_2\\q_3
     \end{bmatrix}\bigr) \;\middle|\;
     \begin{aligned}
       q_1 &\in \{0.55,\,0.9\},\\
       q_2 &\in \{0.1,\,0.45\},\\
       q_3 &\in \{0.1,\,0.8\}
     \end{aligned}\right\}.
\end{equation}
Optimizing the robust MDP~\eqref{eq:robust_mdp} with respect to the rectangular transition uncertainty set $\mc{U}_r$, we obtain $V^\star(\mc{U}_r)=0.75$ and an optimal occupancy measure of $x^\star=\begin{bmatrix}1.3 & 0 & 1.3 & 7.5\end{bmatrix}^\top$. The optimality gap $V^\star(\mathcal U)-V^\star(\mc{U}_r)=0.11$ exists due to the additional transition probability vectors that $\mc{U}_r$ introduced during over-approximation. Once optimized over the worst case, the resulting policy will be more conservative in nature.


\textbf{Occupancy Transformation via LOH}. Instead of over-approximating $\mathcal U$, we consider the following pair of linear state and state-action transformation matrices, 
\begin{equation}
    W=\begin{bmatrix} 2 & -1 & 0\\ -1 & 2 & 0\\ 0 & 0 & 1\end{bmatrix},
    \quad
    T=\begin{bmatrix} 2 & 1 & -1 & 0\\ 0 & 1 & 0 & 0\\ -1 & -1 & 2 & 0\\ 0 & 0 & 0 & 1\end{bmatrix}.
    \label{eq:WT}
\end{equation}
We verify that $T$ and $W$ satisfy Theorem~\ref{thm:certificate}'s conditions (A1) with a valid $Y$ and the equality constraint in A2, therefore, $T$ is a valid LOH. Since this example is a same-size LOH, we have $\hat D=D$ and $WD=\hat DT$. From (A2), the transformed transition matrix is therefore $\hat P=WPT^{-1}$. We can individually transform the two transition matrices in the correlated uncertainty set $\mc{U}$. Let $v_1=\begin{bmatrix}0.18 & 0.18 & 0.64\end{bmatrix}^\top,
v_2=\begin{bmatrix}0.41 & 0.41 & 0.18\end{bmatrix}^\top,
v_3=\begin{bmatrix}0.32 & 0.32 & 0.36\end{bmatrix}^\top,
v_4=\begin{bmatrix}0.14 & 0.14 & 0.72\end{bmatrix}^\top,
v_5=\begin{bmatrix}0 & 0 & 1\end{bmatrix}^\top .$ 
Then the transformed transition uncertainty set $\widehat{\mc{U}}$ is given by

\begin{equation}\label{eqn:transformed_U}
\widehat{\mc{U}}=\left\{
\begin{aligned}
\widehat P^1&:=\begin{bmatrix}v_1 & v_2 & v_3 & v_5\end{bmatrix},\\
\widehat P^2&:=\begin{bmatrix}v_1 & v_2 & v_4 & v_5\end{bmatrix}
\end{aligned}
\right\}.
\end{equation}
Under LOH $T$, we observe that the only transition matrix uncertainty remaining is strictly in a single state-action $(s_2,a_1)$. Hence $\widehat{\mathcal U}$~\eqref{eqn:transformed_U} is $(s,a)$-rectangular.
Solving the transformed rectangular robust MDP~\eqref{eq:robust_mdp} with $\widehat{\mc{U}}$, we obtain  $V^\star(\widehat{\mathcal U})=0.86,$ with optimal occupancy measure $\hat{x}^\star=\begin{bmatrix}0 & 1.4 & 1.4 & 7.1\end{bmatrix}^\top$, which recovers the optimal value $V^\star(\mc{U})$. Overall,
\begin{equation}
    V^\star(\mc{U}_r)=0.75<V^\star(\widehat{\mathcal U}) =V^\star(\mathcal U)=0.86 .
    \label{eq:robust_final_comparison}
\end{equation}
\begin{figure}[t]
    \centering
    \includegraphics[width=0.9\columnwidth]{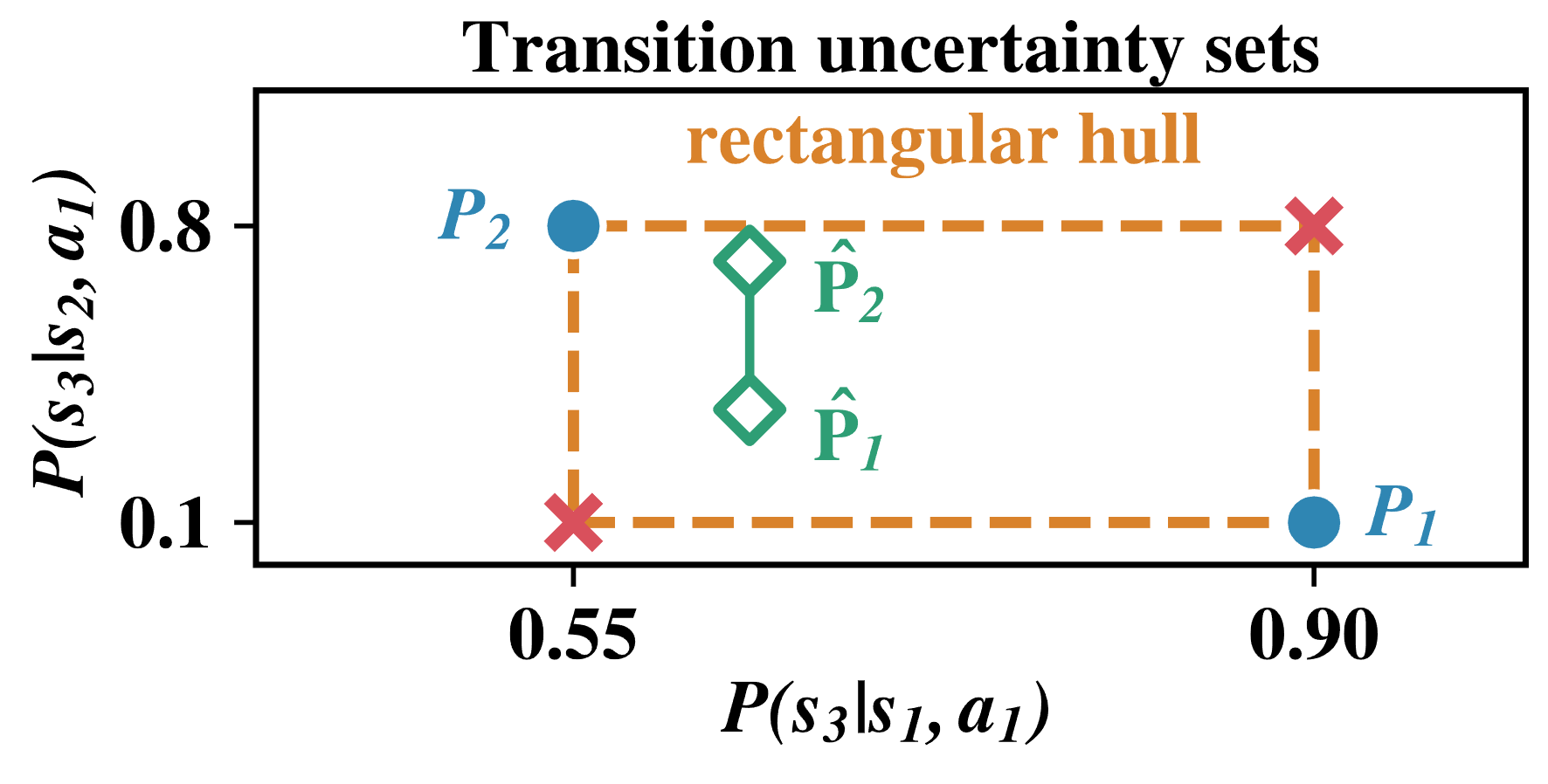}
    \caption{Transition uncertainty sets  $\mc{U}$, $\mc{U}_r$ and $\widehat{\mc{U}}$ projected onto $P(s_3\mid s_1,a_1)$ and $P(s_3\mid s_2,a_1)$. Blue is $\mc{U}$ and is non-rectangular; dashed orange is its rectangular over-approximation $\mc{U}_r$; green is $\widehat{\mc{U}}$ and is rectangular.}
    \vspace{-2em}\label{fig:transition_sets}
\end{figure}

\subsection{MDP State-action Decomposition}
\label{subsec:decoupling_example}
In addition to facilitating better solutions for robust MDPs, LOHs can be used to discover state-action decompositions within an MDP that can reduce an MDP with a large state-action space into multiple MDPs with smaller state-action spaces, which  is more amenable to distributed computing and parallel solver architectures. To achieve this, we seek LOHs such that the transformed MDP's transition matrix $\widehat{P}$ has  block-diagonal structure. 

Consider an MDP with state set $\mathcal S=\{s_1,s_2,s_3,s_4\}$ and state-action set $\mc{I} = \{(s_1,a_1),(s_1,a_2), (s_2,a_1),$ $(s_3,a_1),(s_3,a_2),(s_4,a_1)\}$, where only $s_1$ and $s_3$ have two actions, while $s_2$ and $s_4$ have one action each. The MDP's discount factor is $\gamma=0.9$,  initial state probability distribution is $\alpha=\begin{bmatrix}1&0&0&0\end{bmatrix}^\top$, reward vector is $R=\begin{bmatrix}0&0&0.1&0&0&0.1\end{bmatrix}^\top$, and transition matrix is
\begin{equation}
    P=
    \begin{bmatrix}
        0.20 & 0.60 & 0 & 0    & 0    & 0\\
        0.40 & 0.20 & 1 & 0    & 0    & 0\\
        0.15 & 0.05 & 0 & 0.50 & 0.70 & 0.30\\
        0.25 & 0.15 & 0 & 0.50 & 0.30 & 0.70
    \end{bmatrix}.
\end{equation}
Solving it gives $V^\star=0.71$ with the optimal occupancy measure $x^\star=\begin{bmatrix}1.2&0&4.4&1.6&0&2.7\end{bmatrix}^\top.$

\textbf{Occupancy Transformation via LOH}. We consider the following pair of linear state and state-action transformation matrices  $W \in \reals^{4\times 4}$ and $T \in \reals^{6\times 6}$, given by
\begin{equation}
    W=
    \begin{bmatrix}
        0.5&0&0&0\\
        0&1&0&0\\
        0.5&0&1&0\\
        0&0&0&1
    \end{bmatrix}, T=
    \begin{bmatrix}
        0.5I_2&0&0&0\\
        0&1&0&0\\
        0.5I_2&0&I_2&0\\
        0&0&0&1
    \end{bmatrix},
\end{equation}
where $(W,T)$ satisfy the requirements of Corollary~\ref{cor:constructive_homomorphism}, and therefore produce a LOH from the original MDP to a transformed MDP with initial state probability distribution $\widehat\alpha =W\alpha=\begin{bmatrix}0.5&0&0.5&0\end{bmatrix}^\top$, reward vector $\widehat{R} = T^{-\top} R = \begin{bmatrix}0&0&0.1&0&0&0.1\end{bmatrix}^\top$, and transition matrix $\widehat{P}$ given by
\begin{equation}
    \widehat P=WPT^{-1}=
    \begin{bmatrix}
        0.2&0.6&0&0&0&0\\
        0.8&0.4&1&0&0&0\\
        0&0&0&0.5&0.7&0.3\\
        0&0&0&0.5&0.3&0.7
    \end{bmatrix}.
    \label{eq:decoupled_kernel}
\end{equation}
Here, $\widehat{P}$ is block diagonal. The transformed MDP can thereby be solved via two separate MDPs, each with $2$ states and $3$ state-action pairs. Let $\mc{M}_1$ denote the MDP corresponding to the states $\mc{S}_1 = \{\hat{s}_1, \hat{s}_2\}$ and state-action pairs $\mc{I}_1 = \{(\hat{s}_1, \hat{a}_1), (\hat{s}_1, \hat{a}_2),(\hat{s}_2, \hat{a}_1)\}$, and let $\mc{M}_2$ denote the second MDP corresponding to the states $\mc{S}_2 = \{\hat{s}_3, \hat{s}_4\}$ and state-action pairs $\mc{I}_2 = \{(\hat{s}_3, \hat{a}_1), (\hat{s}_3, \hat{a}_2),(\hat{s}_4, \hat{a}_1)\}$. Additionally, let $\widehat{V}^\star_1$ denote the optimal value of the first MDP and $\widehat{V}^\star_2$ denote the optimal value of the second MDP. Solving the two MDPs separately gives
\begin{equation}
\begin{aligned}
    \widehat V^\star_1=0.44, & \quad \widehat x_1^\star=\begin{bmatrix}0.61&0&4.4\end{bmatrix}^\top,\\
    \widehat V^\star_2=0.27, &\quad\widehat x_2^\star=\begin{bmatrix}2.3&0&2.7\end{bmatrix}^\top.
\end{aligned}
\label{eq:decoupled_values}
\end{equation}
When combined, these values give $\widehat V^\star_1+\widehat V^\star_2 =0.71=V^\star.$ Stacking the two solutions gives the optimal occupancy measure of the transformed MDP, $\widehat{x}^\star=\begin{bmatrix}\widehat{x}_1^{\star\top}&\widehat{x}_2^{\star\top}\end{bmatrix}^\top$, and mapping it back via $T^{-1}$ gives $T^{-1}\widehat{x}^\star=\begin{bmatrix}1.2&0&4.4&1.6&0&2.7\end{bmatrix}^\top=x^\star$, which returns the optimal occupancy measure of the original MDP.

%% file: 6_future_work.tex
\section{Conclusion}
\label{sec:conclusion}
We introduced LOH for discounted MDPs over finite state-action spaces. We provide sufficient conditions for when the transformed  MDP  is homomorphic to the original MDP and when it produces the same optimal value. We demonstrate how the LOH framework is useful to optimize robust MDP under nonrectangular transition uncertainty sets. Future work will involve finding LOHs for these applications over larger state-action spaces and exploring value function-based MDP solution approaches under LOH.